\documentclass[10pt]{amsart}
\usepackage[english]{babel}
\usepackage{amsmath,amssymb,amsthm,amscd}
\usepackage{url}
\usepackage{color,hyperref}

\usepackage{tikz}
\usetikzlibrary{fit}

\theoremstyle{plain}
\newtheorem{theorem}{Theorem}[section]
\newtheorem{proposition}[theorem]{Proposition}
\newtheorem{lemma}[theorem]{Lemma}
\newtheorem{corollary}[theorem]{Corollary}

\theoremstyle{definition}

\newtheorem{remark}[theorem]{Remark}
\newtheorem{example}[theorem]{Example}

\newtheorem*{conjecture}{Conjecture}

\numberwithin{equation}{section}

\newcommand{\N}{\mathbb{N}}

\newcommand{\R}{\mathbb{R}}
\newcommand{\set}[2]{\{#1\,|\ #2\}}
\newcommand{\sub}{\subseteq}
\newcommand{\tabulka}[9]{
\begin{tabular}{c}
#1\\
\begin{tabular}{ccc}
\begin{tabular}[t]{r|cc}%
$+$ & $a$ & $b$ \\\hline
$a$ & #2 & #3 \\
$b$ & #4 & #5\\
\end{tabular}
&&
\begin{tabular}[t]{r|cc}%
$\cdot$ & $a$ & $b$ \\\hline
$a$ & #6 & #7 \\
$b$ & #8 & #9\\
\end{tabular}
\end{tabular}
\end{tabular}
}

\makeatletter
\@namedef{subjclassname@2020}{\textup{2020} Mathematics Subject Classification}
\makeatother

\begin{document}
\title[Congruence-simple semirings without nilpotent elements]{Congruence-simple semirings without nilpotent elements}

 \author[T.~Kepka]{Tom\'{a}\v{s}~Kepka}
 \address{Department of Algebra, Faculty of Mathematics and Physics, Charles University, Sokolovsk\'{a} 83, 186 75 Prague 8, Czech Republic}
 \email{kepka@karlin.mff.cuni.cz}

\author[M.~Korbel\'a\v{r}]{Miroslav~Korbel\'a\v{r}}
\address{Department of Mathematics, Faculty of Electrical Engineering, Czech Technical University in Prague, Technick\'{a} 2, 166 27 Prague 6, Czech Republic}
\email{korbemir@fel.cvut.cz}

\author[G.~Landsmann]{\textsc{G\"{u}nter Landsmann}}
\address{Research Institute for Symbolic Computation, Johannes Kepler University, Alten\-bergerstr. 69, A-4040 Linz, Austria}
\email{landsmann@risc.jku.at}

\thanks{
The second and third authors acknowledge the support by the bilateral Austrian Science Fund (FWF) project I 4579-N and Czech Science Foundation (GA\v CR) project 20-09869L "``The many facets of orthomodularity''}

\keywords{congruence, simple, semiring, nilpotent, zero-divisors}
\subjclass[2020]{16Y60, 16K99}
\date{\today}


\begin{abstract}
We provide a classification of congruence-simple semirings with a multiplicatively absorbing element and without non-trivial nilpotent elements.
\end{abstract}

\maketitle

Semirings gained a lot of attention in recent time thanks to their applications in various branches of mathematics (for more details see \cite{golan,weinert}). Congruence-simple semirings, as the structural keystones,  were studied e.g., in \cite{simple_comm,simple,flaska,jezek,zumbragel,simple_zero}
and, currently, they are of interest due to their possible applications in cryptography \cite{durcheva,maze}.

A complete classification of commutative congruence-simple semirings was done in \cite{simple_comm}. Finite congruence-simple semirings were classified in \cite{zumbragel} (up to a special case of additively idempotent semirings with a bi-absorbing element). 
The purpose of the present short note is to obtain basic classification of congruence-simple semirings possessing multiplicatively absorbing element and no non-trivial nilpotents (Theorem \ref{6.1} and Corollary \ref{commutative}).

\section{Preliminaries}

A \emph{semiring} is a non-empty set equipped with two associative binary operations (usually denoted as addition and multiplication) such that the addition is commutative and the multiplication distributes over the addition from both sides.

A non-empty subset $I$ of $S$ is an \emph{ideal} (\emph{bi-ideal}, resp.) of $S$ if $(I+I)\cup SI\cup IS\sub I$ ($(S+I)\cup SI\cup IS\sub I$, resp.). A semiring $S$ is called \emph{congruence-simple} if $S$ has just two congruences and \emph{ideal-simple} (\emph{bi-ideal-simple}, resp.) if $|S|\geq 2$ and $I=S$ whenever $I$ is an ideal (bi-ideal, resp.) containing at least two elements. 

A semiring $S$ is called \emph{additively idempotent} if $x+x=x$ for every $x\in S$ and \emph{additively cancellative} if $a+c\neq b+c$ for all $a,b,c\in S$ such that $a\neq b$.

An element $w\in S$ is called 
\begin{itemize}
\item \emph{multiplicatively (additively, resp.) absorbing} if $Sw=\{w\}=wS$ ($S+w=\{w\}$, resp.);
\item \emph{multiplicatively (additively, resp.) neutral} if $xw=x=wx$ ($x+w=x$, resp.) for every $x\in S$; 
\item \emph{bi-absorbing} if it is both multiplicatively and additively absorbing (such an element will be denoted by $o_S$);
\item a \emph{zero} if it is multiplicatively absorbing and additively neutral (such an element will be denoted by $0_S$).
\end{itemize}

Let $S$ be a semiring possessing a multiplicatively absorbing element $w$. An element $x\in S$ is called \emph{nilpotent} if $x^n=w$ for some $n\in\N$. Such an element $x$ is non-trivial if $x\neq w$.

\section{Two-element semirings with multiplicatively absorbing element}\label{two-elements}

One can check quite easily that (up to isomorphism) there exist just eight two-element semirings possessing a multiplicatively absorbing element. And these are:

\begin{center}
\begin{tabular}[t]{cc}%
\tabulka{$\mathbb{T}_1$}{$a$}{$b$}{$b$}{$a$}{$a$}{$a$}{$a$}{$a$} &
\tabulka{$\mathbb{T}_2$}{$a$}{$b$}{$b$}{$a$}{$a$}{$a$}{$a$}{$b$}\\
&\\[6pt]
\tabulka{$\mathbb{T}_3$}{$a$}{$b$}{$b$}{$b$}{$a$}{$a$}{$a$}{$a$} &
\tabulka{$\mathbb{T}_4$}{$a$}{$b$}{$b$}{$b$}{$a$}{$a$}{$a$}{$b$}\\
&\\[6pt]
\tabulka{$\mathbb{T}_5$}{$a$}{$a$}{$a$}{$b$}{$a$}{$a$}{$a$}{$a$} &
\tabulka{$\mathbb{T}_6$}{$a$}{$a$}{$a$}{$b$}{$a$}{$a$}{$a$}{$b$} \\

&\\[6pt]
\tabulka{$\mathbb{T}_7$}{$a$}{$a$}{$a$}{$a$}{$a$}{$a$}{$a$}{$a$} &
\tabulka{$\mathbb{T}_8$}{$a$}{$a$}{$a$}{$a$}{$a$}{$a$}{$a$}{$b$}
\end{tabular}
\end{center}

\bigskip
Let us notice, that the semirings $\mathbb{T}_1$, $\mathbb{T}_2$, $\mathbb{T}_3$ and $\mathbb{T}_4$ have a zero element $a=0_S$ and the semirings $\mathbb{T}_5$, $\mathbb{T}_6$, $\mathbb{T}_7$ and $\mathbb{T}_8$ have a bi-absorbing element $a=o_S$. The only cases that are rings are $\mathbb{T}_1$ and $\mathbb{T}_2$.

Of course, every two-element semiring is both congruence-simple and ideal-simple.

\section{Various useful congruences of semirings}

Throughout this section, let $S$ be a semiring.

\begin{remark}\label{3.1}
 The following binary relations $(\sub S\times S)$ are congruences of $S$ (easy to check directly):
 \begin{enumerate}
  \item $\alpha_I$, $I$ being an ideal of $S$:
  $(x,y)\in\alpha_I$ $\Leftrightarrow$  $x+a=y+b$ for some $a,b\in I$;
  \item $\beta_J$, $J$ being a bi-ideal of $S$:
  $\beta_J=(J\times J)\cup id_{S}$;
  \item $\gamma_n$, $n$ being a positive integer: $(x,y)\in\gamma_n$ $\Leftrightarrow$  $nx=ny$;
 \item $\delta$: $(x,y)\in\delta$ $\Leftrightarrow$ $2^ix=y+u$ and $2^iy=x+v$ for some $i\in\N_0$ and $u,v\in S$.
\end{enumerate}
\end{remark}

\begin{lemma}\label{3.0}
 Let $w\in S$ be a multiplicatively absorbing element. Then $w+w=w$. 
\end{lemma}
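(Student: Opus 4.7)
The plan is to exploit distributivity together with the multiplicatively absorbing property of $w$. The key observation is that we can compute a suitable product in two different ways and compare the results.

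Specifically, I would form the product $(w+w)\cdot w$. On the one hand, by distributivity of multiplication over addition (which holds in every semiring), this product equals $w\cdot w + w\cdot w$. On the other hand, since $w$ is multiplicatively absorbing we have $Sw=\{w\}$, and in particular $w\cdot w = w$ and $(w+w)\cdot w = w$. Combining these two evaluations gives
\[
w \;=\; (w+w)\cdot w \;=\; w\cdot w + w\cdot w \;=\; w+w,
\]
which is the desired identity.

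There is essentially no obstacle here: the argument is a one-line manipulation that only uses the definition of a semiring (associativity of $+$ and $\cdot$, commutativity of $+$, two-sided distributivity) together with the hypothesis $Sw=\{w\}=wS$. No appeal to the list of two-element semirings from Section \ref{two-elements} or to the congruences from Remark \ref{3.1} is required, so the lemma truly is a preliminary fact to be used repeatedly in what follows.
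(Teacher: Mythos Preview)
Your argument is correct and essentially identical to the paper's: the paper computes $w = w(w+w) = w^2 + w^2 = w+w$, whereas you multiply on the right to get $w = (w+w)w = w^2 + w^2 = w+w$. The only difference is which side of the absorbing property you invoke, which is immaterial.
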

\begin{proof}
 Obviously, $w=w(w+w)=w^2+w^2=w+w$.
\end{proof}

\begin{lemma}\label{3.2}
 Let $I$ be an ideal of $S$. Then:
 \begin{enumerate}
  \item[(i)] $I\times I\sub \alpha_I$.
  \item[(ii)] $\alpha_I=id_S$ if and only if $I=\{0_S\}$.
 \end{enumerate}
\end{lemma}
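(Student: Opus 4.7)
For part (i), the plan is to exhibit, for any pair $x, y \in I$, witnesses $a, b \in I$ realising the relation $\alpha_I$. The obvious choice is $a := y$ and $b := x$, because then $x + a = x + y = y + x = y + b$ by commutativity of addition, and both witnesses lie in $I$ by assumption. No obstacle here.

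For part (ii), one direction is immediate: if $I = \{0_S\}$, then $(x, y) \in \alpha_I$ forces $x + 0_S = y + 0_S$, hence $x = y$. The interesting direction is to assume $\alpha_I = id_S$ and deduce $I = \{0_S\}$. By part (i), the inclusion $I \times I \subseteq \alpha_I = id_S$ forces $I$ to be a singleton, say $I = \{w\}$. Using that $I$ is an ideal, I get $sw, ws \in \{w\}$ for all $s \in S$ and $w + w \in \{w\}$; the former says $w$ is multiplicatively absorbing, the latter that $w + w = w$ (which also follows from Lemma \ref{3.0}).

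It remains to show that $w$ is additively neutral, so that $w = 0_S$. The key observation is that $(x + w) + w = x + (w + w) = x + w$ for every $x \in S$, which exhibits the pair $(x + w,\, x)$ as a member of $\alpha_I$ via the witnesses $a = w$ and $b = w$ (both in $I$). Since $\alpha_I = id_S$, this forces $x + w = x$. Combined with multiplicative absorbency, $w$ is a zero of $S$, completing the argument.

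The only mildly subtle point, which I would flag but not expect to be a real obstacle, is that the statement $I = \{0_S\}$ presupposes $S$ has a zero element; the proof above produces this zero (namely the unique element of $I$) rather than assuming it a priori, so the lemma should be read as also asserting the existence of $0_S$ in $S$ whenever $\alpha_I = id_S$.
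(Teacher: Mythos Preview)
Your proof is correct and follows essentially the same approach as the paper: both arguments reduce $I$ to a singleton $\{w\}$ via part (i), observe that $w$ is multiplicatively absorbing, and then use $w+w=w$ to show $(x+w,x)\in\alpha_I=id_S$, whence $w=0_S$. Your write-up simply spells out the witnesses for $\alpha_I$ more explicitly than the paper does.
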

\begin{proof}
 (i) Obvious.
 
 (ii) It is apparent that $I=\{0_S\}$ implies $\alpha_I=id_S$. Conversely, if $\alpha_I=id_S$, then $I=\{w\}$ is a one-element ideal and it follows immediately that $w$ is multiplicatively absorbing in $S$. On the other hand, by Lemma \ref{3.0}, $x+w+w=x+w$, for every $x\in S$. Hence $(x+w,x)\in\alpha_I=id_S$ and therefore $x+w=x$. Thus, $w=0_S$.
\end{proof}

\begin{lemma}\label{3.3}
 Let $I$ be a bi-ideal of $S$. Then:
 \begin{enumerate}
  \item[(i)] $\beta_I=id_S$ if and only if $I=\{o_S\}$.
  \item[(ii)] $\beta_I=S\times S$ if and only if $I=S$.
  \item[(iii)] $\beta_I\sub\alpha_I$.
  \item[(iv)] $\beta_I=\alpha_I$ if and only if $I=S$.
  \item[(v)] If $S$ has an additively absorbing element, then $\alpha_I=S\times S$.
 \end{enumerate}
\end{lemma}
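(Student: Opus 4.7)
The plan is to verify each of (i)--(v) directly from the definitions, exploiting the fact that a bi-ideal is in particular an ideal (since $I+I\sub S+I\sub I$), so that $\alpha_I$ is well-defined. Parts (i) and (ii) fall out of the set-theoretic form $\beta_I=(I\times I)\cup id_S$. For (i), equality $\beta_I=id_S$ is equivalent to $I\times I\sub id_S$, i.e.\ $I=\{w\}$; the three bi-ideal closure conditions $S+w\sub\{w\}$, $Sw\sub\{w\}$, $wS\sub\{w\}$ then force $w$ to be both additively and multiplicatively absorbing, so $w=o_S$, and the converse is immediate. For (ii), if $\beta_I=S\times S$ and $|S|\geq 2$, then every $x\in S$ pairs with some $y\neq x$, and such a pair must lie in $I\times I$, giving $x\in I$; the singleton case is trivial.

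For (iii), I would split $(x,y)\in\beta_I$ into the diagonal case ($x=y$, witnessed by $a=b$ equal to any element of $I$) and the case $(x,y)\in I\times I$, where commutativity of addition supplies the witness $x+y=y+x$ with $a=y$, $b=x\in I$. Part (v) is equally short: if $o$ is additively absorbing, then for any $c\in I$ one has $o=o+c\in I$, so $o\in I$; then $x+o=o=y+o$ realizes every pair $(x,y)$ in $\alpha_I$.

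The main obstacle is (iv), and in particular the reverse implication. The forward implication follows from (ii) together with the observation that $\alpha_S=S\times S$ (by commutativity, as in (iii)). For the converse, the crucial step is to exhibit, for any $x\in S$ and any $a\in I$, membership $(x,x+a)\in\alpha_I$; the witness is the identity $x+(a+a)=(x+a)+a$, valid because $a+a\in I$ by the bi-ideal closure. Assuming $\alpha_I=\beta_I$, this pair must belong to $(I\times I)\cup id_S$. In the first alternative $x\in I$ directly; in the second alternative $x=x+a$, but $x+a\in S+I\sub I$ still forces $x\in I$. Hence $x\in I$ for every $x\in S$, so $I=S$.
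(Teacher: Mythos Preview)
Your proof is correct. Parts (i)--(iii) and (v) are left as ``easy to verify'' in the paper, and your explicit arguments for them are fine. For the nontrivial direction of (iv), both you and the paper rely on the bi-ideal closure $S+I\sub I$ to manufacture $\alpha_I$-witnesses lying in $I$, but the paper's execution is slightly more direct: for arbitrary $x\neq y$ in $S$ and any $w\in I$ one has $x+(y+w)=y+(x+w)$ with $y+w,\,x+w\in I$, so $(x,y)\in\alpha_I=\beta_I$, whence $x,y\in I$. This actually shows that $\alpha_I=S\times S$ for \emph{every} bi-ideal $I$, and it avoids the case split that your choice of pair $(x,x+a)$ forces. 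Both routes are short; the paper's has the small bonus of exposing this stronger intermediate fact, which also makes the direction $I=S\Rightarrow\beta_I=\alpha_I$ automatic.
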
 
\begin{proof}
(i),(ii),(iii) and (v) are easy to verify. To show (iv), assume that $\beta_I=\alpha_I$. If $x,y\in S$, $x\neq y$ and $w\in I$ then $x+(y+w)=y+(x+w)$. As $x+w,y+w\in I$, we have that $(x,y)\in\alpha_I=\beta_I$. Hence $x,y\in I$ and we obtain that $I=S$.  
\end{proof}

\begin{lemma}\label{3.4}\noindent
The following hold:
\begin{enumerate}
\item[(i)] If $\gamma_2=id_S$ and $\delta=S\times S$ then the semiring $S$ is additively cancellative.
\item[(ii)] If $\gamma_2=S\times S$ and $\gamma_3=id_S$ then $S$ is a ring of characteristic $2$.
\item[(iii)] If $\gamma_2=S\times S=\gamma_3$ then $o_S\in S$ and $nx=o_S$ for all $x\in S$ and $n\geq 2$.
\item[(iv)]  $\delta=id_S$ if and only if the semiring $S$ is additively idempotent.
\end{enumerate}
\end{lemma}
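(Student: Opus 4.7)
The four parts are largely independent. Parts (ii), (iii), and (iv) admit fairly direct arguments, whereas (i) is the subtle one.

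For (iv), the reverse direction uses that additive idempotence forces $2^i x = x$, so $(x, y) \in \delta$ reduces to $x = y + u$ and $y = x + v$; then $x + y = (y + u) + y = y + u = x$ and symmetrically $x + y = y$, giving $x = y$. For the forward direction, $(x, 2x) \in \delta$ holds in every semiring: take $i = 2$, so $4x = 2x + 2x$ and $8x = x + 7x$. Hence $\delta = id_S$ forces $x = 2x$. For (iii) and (ii), $\gamma_2 = S\times S$ makes $e := 2x$ independent of $x$. In (iii), $\gamma_3 = S\times S$ similarly makes $3x = e + x$ constant; setting $x = e$ gives $e + e = 2e = e$, so $e + x = e$ for all $x$, making $e$ additively absorbing; distributivity $ex = (2y)x = 2(yx) = e$ (and symmetric) makes it multiplicatively absorbing, so $e = o_S$, and $nx = o_S$ for $n \geq 2$ follows inductively. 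In (ii), one computes $3(e+x) = (e+e) + (e+x) = e + x = 3x$ and applies $\gamma_3 = id_S$ to get $e + x = x$; since $x + x = e$, $(S,+)$ becomes an abelian group of exponent $2$, and $ex = 2(yx) = e$ shows $e$ is the ring zero, making $S$ a ring of characteristic $2$.

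For (i), suppose $a + c = b + c$. The plan has four steps. First, by induction on $k$ derive $2^k a + c = 2^k b + c$ for all $k \geq 0$: the step is $2^{k+1}a + c = 2^k a + (2^k a + c) = 2^k a + (2^k b + c)$, which is symmetric in $a$ and $b$. Second, apply $\delta = S\times S$ to $(a,b)$ to obtain $n \in \N_0$ and $u, v \in S$ with $2^n a = b + u$ and $2^n b = a + v$; summing produces the auxiliary identity $2^n(a + b) = (a + b) + (u + v)$. Third, substituting into the first step at $k = n$ yields the crucial relation $b + u + c = a + v + c$. Fourth, combine this with further applications of $\delta$ and the injectivity of doubling from $\gamma_2 = id_S$ to force an equation $2x = 2y$, then invoke $\gamma_2 = id_S$ once more to conclude $a = b$.

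The main obstacle lies in the fourth step: the relation $b + u + c = a + v + c$ is itself a cancellation-type identity of the same flavor as the hypothesis, so naive iteration of $\delta$ just produces further ``cancellation-by-something'' equations rather than a genuine $\gamma_2$-pair. The crux will be a delicate algebraic manipulation exploiting the universality of $\delta$ (which realizes any element as a $2^i$-shift of any other up to additive error) together with the injectivity of doubling, so as to isolate an equation $2x = 2y$ in computable quantities built from $a$, $b$, $u$, $v$, $c$.
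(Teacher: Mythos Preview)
Your treatments of (ii), (iii), and (iv) are correct and essentially match the paper's arguments (with only cosmetic differences in how the constant $e=2x$ is manipulated).

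The gap is in (i), and you have correctly located it yourself: your fourth step is not a plan but a hope. The specific misstep is applying $\delta$ to the pair $(a,b)$. Doing so produces the relation $b+u+c=a+v+c$, which, as you note, is just another cancellation-type equation with a new ``cancelland'' $u+c$ (resp.\ $v+c$); nothing has improved. The paper's key idea is to apply $\delta$ to a pair involving the element to be cancelled. From $(b,c)\in\delta=S\times S$ one obtains $2^{i}b=c+v$ for some $i$ and $v$; substituting into $a+c=b+c$ gives
\[
a+2^{i}b=a+c+v=b+c+v=b+2^{i}b.
\]
Now the obstructing summand is $2^{i}b$, a power-of-two multiple of $b$ itself, and this is exactly what allows $\gamma_2=id_S$ to bite. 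Adding $a$ and $b$ respectively to the equation $a+2^{i}b=b+2^{i}b$ yields $2(a+2^{i-1}b)=2(b+2^{i-1}b)$, hence $a+2^{i-1}b=b+2^{i-1}b$; by descent one reaches $a+b=2b$. A symmetric run (using $2^{j}a=c+u$) gives $a+b=2a$, so $2a=2b$ and finally $a=b$.

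In short: your Step~1 ($2^{k}a+c=2^{k}b+c$) is correct but unnecessary, and your Step~2 targets the wrong pair. Redirect $\delta$ from $(a,b)$ to $(b,c)$ (and $(a,c)$), and the descent via $\gamma_2=id_S$ goes through immediately.
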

\begin{proof}
(i) Indeed, assume that $x+z=y+z$ for some $x,y,z\in S$. From $\delta=S\times S$ we have that  $2^i y=z+v$ for some  $v\in S$ and $i\in\N_0$. Hence $y+2^iy=y+z+v=x+z+v=x+2^iy$. If $i\geq 1$, then $2(y+2^{i-1}y)=y+y+2^iy=y+x+2^iy=x+x+2^iy=2(x+2^{i-1}y)$. By $\gamma_2=id_S$, it follows that $y+2^{i-1}y=x+2^{i-1}y$ and, by induction, we obtain that $y+2^0y=x+2^0y$. Hence $2y=x+y$ and, similarly, $2x=x+y$. Thus $2y=2x$ and $y=x$. Therefore, $S$ is additively cancellative.

(ii)  From $\gamma_2=S\times S$ it follows that $2x=2y(=w)$ for all $x,y\in S$. Hence $w$ is multiplicatively absorbing and $2w=w$, by Lemma \ref{3.0}. Furthermore, $3x=x+w=x+w+w=3x+w=9x$. Since $\gamma_3=id_S$, we obtain that $x=3x=x+w$ and $w=0_S$. Hence $S$  is a ring of characteristic $2$.

(iii) As in (ii), from $\gamma_2=S\times S$ it follows that there is a unique multiplicatively absorbing element $w\in S$ such that $w=2x$ for all $x\in S$. Similarly, from $\gamma_3=S\times S$ we obtain that $w'=3x$ for all $x\in S$ and some multiplicatively absorbing element $w'\in S$. By the uniqueness, $w=w'$. Hence $w+x=2x+x=3x=w$ for every $x\in S$ and therefore $w=o_S$. The rest is obvious. 

(iv)  We have $2a=a+a$ and $2^2 a=2a+2a$ for every $a\in S$. Hence $(a,2a)\in\delta$. Consequently, $S$ is additively idempotent, provided that $\delta=id_S$. 

On the other hand, if $S$ is additively idempotent and $(x,y)\in\delta$ for some $x,y\in S$, then $2^ix=y+u$, $2^iy=x+v$ for some $u,v\in S$ and therefore $x=y+u$, $y=x+v$. Hence $x=x+x=x+y+u=x+y+y+u=x+y+x=x+y$ and similarly $y=y+x$. Therefore $x=x+y=y$ and $\delta=id_S$.
\end{proof}

\section{Semirings with multiplicatively absorbing element and without nilpotent elements}

In this section, let $S$ be a semiring possessing a multiplicatively absorbing element $w$ and let $x^2\neq w$ for every $x\in S$, $x\neq w$. Obviously, this condition equivalently means that $S$ has no non-trivial nilpotent elements.

\begin{lemma}\label{4.1}
The following hold:
 \begin{enumerate}
  \item[(i)] $S+w$ is a bi-ideal of $S$.
  \item[(ii)] $|S+w|=1$ if and only if $w=o_S$.
  \item[(iii)] $S+w=S$ if and only if $w=0_S$.
 \end{enumerate}
\end{lemma}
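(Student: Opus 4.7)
The proof is a short direct verification of each clause; the only nontrivial input beyond definitions is the identity $w+w=w$ from Lemma \ref{3.0}, together with the absorbing property $Sw=\{w\}=wS$. Notably, the no--nilpotent hypothesis of the section is not needed for this particular lemma.

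For (i), I would directly check the three bi-ideal closure conditions for $I=S+w$. Closure under addition with $S$ is immediate by associativity and commutativity: $s+(s'+w)=(s+s')+w\in S+w$. Closure under multiplication by $S$ from either side uses the absorbing property: $s(s'+w)=ss'+sw=ss'+w\in S+w$, and symmetrically on the right. So $(S+I)\cup SI\cup IS\sub I$ as required.

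For (ii), the implication $w=o_S\Rightarrow |S+w|=1$ is immediate since the additively absorbing property gives $s+w=w$ for every $s\in S$. Conversely, if $|S+w|=1$, then since Lemma \ref{3.0} yields $w+w=w\in S+w$, we must have $S+w=\{w\}$; hence $s+w=w$ for all $s\in S$, so $w$ is additively absorbing, and combined with the multiplicative absorption this gives $w=o_S$.

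For (iii), the implication $w=0_S\Rightarrow S+w=S$ is immediate from additive neutrality. Conversely, assume $S+w=S$ and pick an arbitrary $s\in S$; write $s=s'+w$ for some $s'\in S$. Then, using $w+w=w$ from Lemma \ref{3.0},
\[
s+w=(s'+w)+w=s'+(w+w)=s'+w=s,
\]
so $w$ is additively neutral, and together with being multiplicatively absorbing this forces $w=0_S$. There is no real obstacle here; the entire lemma is a definitional unpacking anchored on the identity $w+w=w$.
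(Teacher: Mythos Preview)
Your proof is correct and follows essentially the same approach as the paper. The paper simply declares (i) and (ii) ``easy to check'' and for (iii) gives exactly your argument: write $a=b+w$, then $a+w=b+w+w=b+w=a$ via Lemma~\ref{3.0}; your additional remark that the no-nilpotent hypothesis is unused here is also accurate.
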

\begin{proof}
 (i) and (ii) are easy to check. 
 
 (iii) Assume that $S+w=S$. For every $a\in S$ there is $b\in S$ such that $b+w=a$. Hence, by Lemma \ref{3.0},  $a+w=b+w+w=b+w=a$ and therefore $w=0_S$. The opposite implication is obvious.  
\end{proof}

For every $a\in S$, put $(a:w)_{\ell}=\set{x\in S}{xa=w}$ and $(a:w)_{r}=\set{x\in S}{ax=w}$. Clearly, $w\in (a:w)_{\ell}\cap(a:w)_{r}$, $(a:w)_{\ell}$ is a left ideal of $S$ and $(a:w)_{r}$ is a right ideal of $S$.

\begin{lemma}\label{4.2}
 Let $a\in S$. Then:
 \begin{enumerate}
  \item[(i)] $(a:w)_{\ell}=(a:w)_{r}$ $(=(a:w))$ is an ideal of the semiring $S$.
  \item[(ii)] $(a:w)=\{w\}$ if and only if $ab\neq w\neq ba$ for every $b\in S$, $b\neq w$.
  \item[(iii)] $(a:w)=S$ if and only if $Sa=\{w\}=aS$.
 \end{enumerate}
\end{lemma}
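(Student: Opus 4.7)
The plan is to leverage the no-non-trivial-nilpotent hypothesis (which says $x^2 = w$ forces $x = w$) together with the fact that $w$ swallows everything under multiplication. The key trick is that whenever $xa = w$, one can immediately compute
\[
(ax)^2 = a(xa)x = awx = w,
\]
so $ax$ is nilpotent and hence equals $w$. By the same calculation with the roles of $a$ and $x$ swapped, $ax = w$ forces $xa = w$. This gives $(a:w)_\ell = (a:w)_r$, which I will call $(a:w)$.

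For the remainder of (i), I need to check that $(a:w)$ is actually closed under addition in addition to being closed under one-sided multiplication by $S$ (which the paper already noted). If $x,y\in(a:w)$, then $(x+y)a = xa + ya = w+w$, and the latter equals $w$ by Lemma \ref{3.0}. Combined with $(a:w)_\ell$ being a left ideal and $(a:w)_r$ a right ideal, this shows $(a:w)$ is a two-sided ideal in the sense of the paper's definition.

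Parts (ii) and (iii) will then follow almost tautologically from (i). For (ii), the statement $(a:w) = \{w\}$ says exactly that no $b \neq w$ satisfies $ba = w$; using the already-proved equality $(a:w)_\ell = (a:w)_r$, this is equivalent to saying that neither $ab = w$ nor $ba = w$ holds for any $b\in S$ with $b\neq w$. For (iii), the equality $(a:w) = S$ unpacks directly as $ba = w$ and $ab = w$ for every $b\in S$, i.e.\ $Sa = \{w\} = aS$.

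The only non-routine step is the squaring argument in (i); once that is in place, everything else is a matter of reading off definitions. I do not expect any genuine obstacle here, since the multiplicatively absorbing property of $w$ makes the computation $(ax)^2 = awx = w$ essentially immediate.
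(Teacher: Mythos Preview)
Your proposal is correct and follows essentially the same route as the paper: the key step in (i) is the computation $(ax)^2 = a(xa)x = awx = w$ together with the absence of non-trivial nilpotents, exactly as you wrote, and the paper likewise dismisses (ii) and (iii) as obvious. Your explicit check of additive closure via Lemma~\ref{3.0} is a small elaboration of what the paper sweeps under ``the rest is clear,'' but there is no substantive difference in strategy.
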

\begin{proof}
 (i) It suffices to show that $(a:w)_{\ell}=(a:w)_r$. If $x\in (a:w)_{\ell}$, then $(ax)^2=axax=awx=w$. As $S$ has no non-trivial nilpotents, we obtain that $ax=w$ and $x\in (a:w)_r$. The rest is clear.

(ii) and (iii) are obvious.
\end{proof}

\begin{lemma}\label{4.3}
 Let $a\in S$. If $\alpha_{(a:w)}=id_S$ (see Remark $\ref{3.1}(1)$) then $w=0_S$, $ab\neq 0_S\neq ba$ for every $b\in S$, $b\neq w$, and either $|S|=1$ or $a\neq 0_S$.
\end{lemma}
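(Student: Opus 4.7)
The plan is that this lemma is essentially an immediate synthesis of Lemmas \ref{3.2} and \ref{4.2}, exploiting that $(a:w)$ is already known to be an ideal containing $w$.

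First, by Lemma \ref{4.2}(i), $(a:w)$ is an ideal of $S$, so $\alpha_{(a:w)}$ is a legitimate congruence. Since $\alpha_{(a:w)}=id_S$ by hypothesis, Lemma \ref{3.2}(ii) pins it down completely: $(a:w)=\{0_S\}$. In particular, $0_S$ exists in $S$, and because $w$ always lies in $(a:w)$, one must have $w=0_S$. This gives the first assertion.

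Second, now that $(a:w)=\{w\}$, Lemma \ref{4.2}(ii) translates directly into $ab\neq w\neq ba$ for every $b\in S$ with $b\neq w$, and since $w=0_S$ this is the second assertion.

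For the third assertion, I would argue by contradiction. Suppose $|S|\geq 2$ but $a=0_S=w$. Then $Sa=\{w\}=aS$ trivially (since $w$ is multiplicatively absorbing), so Lemma \ref{4.2}(iii) yields $(a:w)=S$. But then $\alpha_{(a:w)}=\alpha_S\supseteq S\times S$ by Lemma \ref{3.2}(i), contradicting $\alpha_{(a:w)}=id_S$ as soon as $|S|\geq 2$. Hence either $|S|=1$ or $a\neq 0_S$.

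There is no real obstacle: the only thing worth noticing is that the three conclusions align one-to-one with the three parts of Lemmas \ref{3.2}(ii), \ref{4.2}(ii), and \ref{4.2}(iii), all applied to the ideal $(a:w)$.
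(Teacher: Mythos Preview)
Your proof is correct and follows essentially the same route as the paper: both invoke Lemma~\ref{3.2}(ii) to force $(a:w)=\{0_S\}$ (hence $w=0_S$) and then Lemma~\ref{4.2}(ii) for the second assertion. The paper dismisses the third assertion as ``obvious''; your explicit argument via Lemma~\ref{4.2}(iii) and Lemma~\ref{3.2}(i) is a perfectly valid way to unpack that, though one could also simply note that $a=0_S$ would make $ab=0_S$ for all $b$, contradicting the second assertion whenever some $b\neq w$ exists.
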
 
\begin{proof}
 Let $a\in S$ and $\alpha_{(a:w)}=id_S$. By Lemma \ref{3.2}(ii), $w\in(a:w)=\{0_S\}$ and therefore $w= 0_S$. By \ref{4.2}(ii), we have that $ab\neq 0_S\neq ba$ for every $b\in S$, $b\neq w$. The rest is obvious.
\end{proof}

\begin{lemma}\label{4.4}
 Assume that $w=0_S$. Let $a\in S$. Then:
 \begin{enumerate}
  \item[(i)] $0_S\in(a:0_S)$ and the ideal $(a:0_S)$ is a block of the congruence $\alpha_{(a:0_S)}$.
  \item[(ii)] If $\alpha_{(a:0_S)}=id_S$ then $ab\neq 0_S\neq ba$ for every $b\in S$, $b\neq 0_S$.
  \item[(iii)] If $\alpha_{(a:0_S)}=S\times S$ then $a=0_S$.
 \end{enumerate}
\end{lemma}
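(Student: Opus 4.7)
The plan is to treat the three parts in sequence, each of which reduces to a short application of earlier results in the excerpt.

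For (i), I will first note that $0_S a = 0_S$ because $0_S$ is multiplicatively absorbing, so $0_S \in (a:0_S)$ at once. To see that $(a:0_S)$ is a block of $\alpha_{(a:0_S)}$, Lemma \ref{3.2}(i) already gives $(a:0_S)\times(a:0_S) \subseteq \alpha_{(a:0_S)}$, so only the converse remains. Given $x \in (a:0_S)$ and $(x,y)\in\alpha_{(a:0_S)}$, I write $x+p = y+q$ with $p,q \in (a:0_S)$, multiply on the right by $a$, and use $xa = pa = qa = 0_S$ together with the fact that $0_S$ is additively neutral to obtain $0_S = ya + 0_S = ya$, so $y \in (a:0_S)$. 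This pins $(a:0_S)$ down as a single equivalence class.

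Part (ii) is a direct invocation of Lemma \ref{4.3}: under the standing hypothesis $w = 0_S$ of the present lemma, the assumption $\alpha_{(a:0_S)} = id_S$ is exactly the one appearing in \ref{4.3}, whose conclusion already contains the inequality $ab \neq 0_S \neq ba$ for all $b \neq 0_S$.

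For (iii), I will combine (i) with Lemma \ref{4.2}(iii). If $\alpha_{(a:0_S)} = S \times S$, then the only block of the all-relation is $S$ itself, so $(a:0_S)$, being a block by (i), must coincide with $S$. Lemma \ref{4.2}(iii) then yields $Sa = \{0_S\} = aS$; in particular $a^2 = 0_S = w$, and the section's standing assumption that $S$ has no non-trivial nilpotents forces $a = w = 0_S$.

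The only genuine computation is the block argument in (i); parts (ii) and (iii) are immediate appeals to Lemmas \ref{4.3} and \ref{4.2}, so I do not anticipate any substantive obstacle.
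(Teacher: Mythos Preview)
Your proposal is correct and follows essentially the same route as the paper: the block argument in (i) via Lemma~\ref{3.2}(i) plus a one-line multiplication, the direct appeal to Lemma~\ref{4.3} for (ii), and the passage $(a:0_S)=S\Rightarrow a^2=0_S\Rightarrow a=0_S$ for (iii). The only cosmetic differences are that the paper multiplies on the left by $a$ (using $au=av=0_S$ via Lemma~\ref{4.2}(i)) and, in (iii), reads off $a^2=0_S$ directly from $a\in(a:0_S)$ rather than passing through Lemma~\ref{4.2}(iii).
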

\begin{proof}
(i) Let $(x,0_S)\in\alpha_{(a:0_S)}$ for some $x\in S$. Then there are $u,v\in(a:0_S)$ such that  $x+u=0_S+v$ and we obtain that $ax=ax+0_S=ax+au=a(x+u)=a(0_S+v)=a0_S+av=0_S$ and therefore $x\in(a:0_S)$. Hence, by  Lemma \ref{3.2}(i), it follows that  $(a:0_S)$ is a block of the congruence $\alpha_{(a:0_S)}$.

(ii) Follows immediately from Lemma \ref{4.3}.

(iii) If $\alpha_{(a:0_S)}=S\times S$ then, by (i), $(a:0_S)=S$. Hence $a\in(a:0_S)$ and therefore $a^2=0_S$ and, as there are no non-trivial nilpotents in $S$, we have that $a=0_S$.
\end{proof}

\begin{lemma}\label{4.5}
 Assume that $w=o_S$. Let $a\in S$. Then:
 \begin{enumerate}
  \item[(i)] $(a:o_S)$ is a bi-ideal of $S$.
  \item[(ii)] $\alpha_{(a:o_S)}=S\times S$.
  \item[(iii)] If $\beta_{(a:o_S)}=id_S$ (see Remark $\ref{3.1}(2)$), then $ab\neq o_S\neq ba$ for every $b\in S$, $b\neq o_S$.
  \item[(iv)] If $\beta_{(a:o_S)}=S\times S$, then $Sa=\{o_S\}=aS$.
 \end{enumerate}
\end{lemma}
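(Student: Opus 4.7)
The plan is to observe that each of the four parts follows almost mechanically from the machinery assembled in Sections~3 and~4, with (i) being the only item requiring a small direct verification.

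For (i), I already know from Lemma~\ref{4.2}(i) that $(a:o_S)$ is an ideal of $S$, so only the additional bi-ideal condition $S+(a:o_S)\sub (a:o_S)$ needs checking. Given $x\in S$ and $y\in (a:o_S)$, I would compute $a(x+y)=ax+ay=ax+o_S$, and since $o_S$ is additively absorbing this equals $o_S$; the symmetric computation handles $(x+y)a$. Thus $x+y\in(a:o_S)$.

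For (ii), note that $o_S\in(a:o_S)$ (since $o_S$ is multiplicatively absorbing), so $S$ has an additively absorbing element, and Lemma~\ref{3.3}(v) immediately gives $\alpha_{(a:o_S)}=S\times S$. For (iii), the hypothesis $\beta_{(a:o_S)}=id_S$ combined with Lemma~\ref{3.3}(i) forces $(a:o_S)=\{o_S\}$, and then Lemma~\ref{4.2}(ii) translates this into $ab\neq o_S\neq ba$ for every $b\neq o_S$. For (iv), the hypothesis $\beta_{(a:o_S)}=S\times S$ and Lemma~\ref{3.3}(ii) give $(a:o_S)=S$, and Lemma~\ref{4.2}(iii) then delivers $Sa=\{o_S\}=aS$.

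There is no real obstacle: the whole content of the lemma is a packaging of the earlier observations in the special case $w=o_S$. The only point that is not a direct citation is the verification of the additive closure in (i), and this is a one-line consequence of $o_S$ being additively absorbing.
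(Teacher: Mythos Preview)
Your proposal is correct and follows essentially the same approach as the paper. The paper dismisses (i) as ``easy to see,'' argues (ii) by writing out the one-line verification that $b+o_S=o_S+o_S$ gives $(b,o_S)\in\alpha_{(a:o_S)}$ (which is precisely the content of Lemma~\ref{3.3}(v) that you cite), and refers (iii) and (iv) to Lemma~\ref{3.3}(i),(ii), just as you do; your additional citations of Lemma~\ref{4.2}(ii),(iii) merely make explicit the translation step the paper leaves implicit.
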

\begin{proof}
 (i) is easy to see. To show (ii), we obviously have that $b+o_S=o_S+o_S$ for every $b\in S$. Hence $(b,o_S)\in (a:o_S)$ and therefore $\alpha_{(a:o_S)}=S\times S$.
 
 (iii) and (iv) follow from Lemma \ref{3.3}(i) and (ii). 
\end{proof}

The following result is a moderate generalization of \cite[Theorem 2.2]{cornish} where it was proved that a congruence-simple semiring with a zero and no non-trivial nilpotent elements has no non-trivial zero-divisors.

\begin{theorem}\label{4.6}
 Assume that the semiring $S$ is congruence-simple (and $S$ has a multiplicatively absorbing element $w$ such that $x^2\neq w$ for every $x\in S$, $x\neq w$.) Then:
 \begin{enumerate}
  \item[(i)] Either $w= 0_S$ or $w=o_S$.
  \item[(ii)] $ab\neq w$ for all $a,b\in S\setminus\{w\}$.
 \end{enumerate}
\end{theorem}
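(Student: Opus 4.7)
The plan is to exploit congruence-simplicity by applying it to the congruences $\beta_{S+w}$ (for part (i)) and $\alpha_{(a:w)}$ or $\beta_{(a:w)}$ (for part (ii)), in each case using the structural lemmas of the previous section to translate the ``trivial'' and ``full'' alternatives into concrete conclusions.

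For part (i), I would start from the observation that $S+w$ is a bi-ideal of $S$ by Lemma~\ref{4.1}(i), so $\beta_{S+w}$ is a congruence of $S$ by Remark~\ref{3.1}(2). Since $S$ is congruence-simple, $\beta_{S+w}$ is either $id_S$ or $S\times S$. Using Lemma~\ref{3.3}(i)(ii) these two alternatives translate respectively into $S+w=\{o_S\}$ (i.e.\ $|S+w|=1$) or $S+w=S$. Lemma~\ref{4.1}(ii)(iii) then yields the corresponding conclusion $w=o_S$ or $w=0_S$.

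For part (ii), fix $a\in S\setminus\{w\}$ and split on the two cases provided by~(i). If $w=0_S$, then $(a:0_S)$ is an ideal of $S$ by Lemma~\ref{4.2}(i), so $\alpha_{(a:0_S)}$ is a congruence by Remark~\ref{3.1}(1) and congruence-simplicity forces $\alpha_{(a:0_S)}\in\{id_S,S\times S\}$. The second possibility is excluded by Lemma~\ref{4.4}(iii), since it would give $a=0_S=w$, contradicting $a\neq w$. Hence $\alpha_{(a:0_S)}=id_S$, and Lemma~\ref{4.4}(ii) gives $ab\neq 0_S$ for every $b\neq 0_S$. If instead $w=o_S$, then $(a:o_S)$ is a bi-ideal of $S$ by Lemma~\ref{4.5}(i), so $\beta_{(a:o_S)}$ is a congruence by Remark~\ref{3.1}(2), and must again be $id_S$ or $S\times S$. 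The latter option is ruled out because, by Lemma~\ref{4.5}(iv), it would yield $a^2\in aS=\{o_S\}=\{w\}$, contradicting the standing hypothesis $a^2\neq w$ for $a\neq w$. Thus $\beta_{(a:o_S)}=id_S$, and Lemma~\ref{4.5}(iii) produces $ab\neq o_S$ for every $b\neq o_S$, completing the argument.

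There is no real obstacle here; the proof is essentially a bookkeeping exercise once one has the machinery of Section~3 and the ideal-theoretic lemmas of Section~4. The one point that needs attention is to make sure, in the $S\times S$ subcases, that the ``no non-trivial nilpotents'' hypothesis is invoked in the correct form: when $w=o_S$ it is used directly via $a^2\neq w$, whereas when $w=0_S$ it is already baked into Lemma~\ref{4.4}(iii) through $a^2=0_S\Rightarrow a=0_S$.
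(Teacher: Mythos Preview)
Your proposal is correct and follows essentially the same route as the paper: part~(i) via the congruence $\beta_{S+w}$ together with Lemmas~\ref{3.3} and~\ref{4.1}, and part~(ii) by splitting on~(i) and applying $\alpha_{(a:0_S)}$ with Lemma~\ref{4.4} in the zero case and $\beta_{(a:o_S)}$ with Lemma~\ref{4.5} in the bi-absorbing case. Your write-up is in fact a touch more explicit than the paper's about why each relation is a congruence and why the $S\times S$ alternatives fail.
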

\begin{proof}
 (i) The set $I=S+w$ is a bi-ideal and $\beta_I=(I\times I)\cup id_S$ is a congruence of $S$. Hence either $\beta_I=id_S$ or $\beta_I=S\times S$. By Lemma \ref{3.3}, we obtain that $I=\{o_S\}$ or $I=S$. Therefore, by Lemma \ref{4.1}, $w=0_S$ or $w=o_S$.
 
 (ii) If $w=0_S$ and $a\in S\setminus\{0_S\}$ then, by Lemma \ref{4.4}(iii), we have that $\alpha_{(a:0_S)}\neq S\times S$. Hence $\alpha_{(a:0_S)}= id_S$ and, by Lemma \ref{4.4}(ii), it follows that $ab\neq w$ for all $a,b\in S\setminus\{w\}$. 
 
 If $w=o_S$ and $a\in S\setminus\{o_S\}$, then $a^2\neq o_S$. Therefore, by Lemma \ref{4.5}(iv), we have that $\beta_{(a:0_S)}\neq S\times S$. Hence $\beta_{(a:0_S)}= id_S$ and, by Lemma \ref{4.5}(iii), it follows that $ab\neq w$ for all $a,b\in S\setminus\{w\}$.
\end{proof}

\begin{proposition}\label{5.3}
 Assume that the semiring $S$ is a congruence-simple and has a bi-absorbing element $o_S$.  Then just one of the following two cases takes place:
 \begin{enumerate}
  \item $S\cong\mathbb{T}_8$;
  \item $S+S=S$ and $ab\neq o_S$ for all $a,b\in S\setminus\{o_S\}$.
 \end{enumerate}
\end{proposition}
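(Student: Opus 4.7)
The second part of case (2) — that $ab \neq o_S$ for all $a,b \in S \setminus \{o_S\}$ — is already delivered by Theorem \ref{4.6}(ii). So the real content is to show that if $S \not\cong \mathbb{T}_8$, then $S+S = S$. My plan is to set $T := S + S$ and study it via the bi-ideal congruence $\beta_T$, and then to dispose of the degenerate case $T=\{o_S\}$ with a small ad hoc congruence.

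First I would verify that $T$ is a bi-ideal of $S$: $T+T \subseteq T$ by associativity, $S+T \subseteq T$ because $s+(a+b) = (s+a)+b \in S+S$, and $ST,\ TS \subseteq T$ by distributivity. Since $o_S = o_S + o_S \in T$, the set $T$ is nonempty. Hence $\beta_T = (T\times T) \cup id_S$ is a congruence by Remark \ref{3.1}(2). Congruence-simplicity forces $\beta_T \in \{id_S, S\times S\}$, which by Lemma \ref{3.3}(i),(ii) translates to $T = \{o_S\}$ or $T = S$. The second option yields $S+S = S$ and places us in case (2).

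It remains to show that $T = \{o_S\}$ forces $S \cong \mathbb{T}_8$. In this case $x + y = o_S$ for all $x, y \in S$. Put $S^* := S \setminus \{o_S\}$ and consider $\rho := (S^* \times S^*) \cup id_S$. Addition-compatibility of $\rho$ is automatic, since every sum collapses to $o_S$. For multiplication compatibility I would appeal to Theorem \ref{4.6}(ii): for $x, x' \in S^*$ and $y \in S^*$, both $xy$ and $x'y$ lie in $S^*$, so $(xy, x'y) \in \rho$; and for $y = o_S$ both products are $o_S$. Hence $\rho$ is a congruence. Since $(o_S, b) \notin \rho$ for any $b \in S^*$, we have $\rho \neq S\times S$, so simplicity forces $\rho = id_S$. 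This makes every $\rho$-block a singleton, so $|S^*| = 1$ and $|S| = 2$. With $S = \{o_S, b\}$, the no-nilpotent hypothesis rules out $b^2 = o_S$, so $b^2 = b$; comparing the resulting addition and multiplication tables with Section \ref{two-elements} identifies $S$ with $\mathbb{T}_8$.

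The main obstacle is the final step: when the bi-ideal $S+S$ collapses, one has to construct by hand a congruence sensitive enough to measure the size of $S$. The choice $\rho$ is natural, but its multiplicative compatibility depends crucially on Theorem \ref{4.6}(ii) — without the absence of non-trivial nilpotents, a product $xy$ of two elements of $S^*$ could land at $o_S$ and destroy the congruence, after which the whole argument collapses.
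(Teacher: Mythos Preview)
Your proof is correct and follows essentially the same route as the paper: the bi-ideal $S+S$ forces the dichotomy $S+S=S$ or $S+S=\{o_S\}$, and in the latter case the congruence $\rho=((S\setminus\{o_S\})\times(S\setminus\{o_S\}))\cup id_S$ (whose multiplicative compatibility indeed rests on Theorem~\ref{4.6}(ii)) collapses $S$ to two elements. The only cosmetic point is that the paper reserves the symbol $T$ for $S\setminus\{o_S\}$, so your choice $T:=S+S$ clashes with the surrounding notation.
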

\begin{proof}
 By Theorem \ref{4.6}, $TT\sub T$ for $T=S\setminus\{o_S\}$. Assume that the case (2) does not hold. Then $S+S\neq S$ and, as $S+S$ is a bi-ideal, we have that $S+S=\{o_S\}$. The relation $\varrho=(T\times T)\cup \{(o_S,o_S)\}$ is now a congruence of $S$ and $\varrho\neq S\times S$. Thus $\varrho=id_S$ and therefore $|S|=2$ and $S\cong\mathbb{T}_8$.
\end{proof}

\section{The main result}

In this section we provide a classification of congruence-simple semirings with a multiplicatively absorbing element and without non-trivial nilpotent elements.

\begin{theorem}\label{6.1}
 Let $S$ be a congruence-simple semiring possessing a multiplicatively absorbing element $w$ such that $x^2\neq w$ for every $x\in T=S\setminus\{w\}$. Then just one of the following five cases takes place:
 \begin{enumerate}
  \item $S\cong\mathbb{T}_4$ or $\mathbb{T}_8$;
  \item $S$ is a finite field;
  \item $S$ is an infinite simple ring without zero-divisors;
  \item $w=o_S$, $S$ is additively idempotent and $TT\sub T$;
  \item $w=o_S$, $S$ is infinite, $S+S=S$, $2x=o_S$ for every $x\in S$, $TT\sub T$ and, for every $y\in T$, the powers $y,y^2,y^3,\dots$ are pair-wisely different.
  
 \end{enumerate}
\end{theorem}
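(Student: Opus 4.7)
The plan is to apply Theorem \ref{4.6} as a first dichotomy, separating $w = 0_S$ from $w = o_S$ (each carrying $TT\sub T$), and then inside each branch use the simplicity of $S$ to decide whether each of the congruences $\gamma_2$, $\gamma_3$, $\delta$ from Remark \ref{3.1} equals $id_S$ or $S\times S$; Lemma \ref{3.4} then fixes the algebraic type of $S$ in each subcase.

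In the branch $w = 0_S$, the subcase $\gamma_2 = \gamma_3 = S\times S$ is eliminated by Lemma \ref{3.4}(iii), since it would force $w = o_S$ and hence $|S|=1$. When $\gamma_2 = S\times S$ and $\gamma_3 = id_S$, Lemma \ref{3.4}(ii) turns $S$ into a simple ring of characteristic $2$, and combined with $TT\sub T$ it has no zero-divisors; Wedderburn's little theorem separates finite fields (Case (2)) from Case (3). When $\gamma_2 = id_S$ and $\delta = S\times S$, Lemma \ref{3.4}(i) yields additive cancellation, while substituting $y=0_S$ in the definition of $\delta$ produces, for every $x$, some $v$ with $x+v=0_S$; thus the additive monoid is a group and $S$ is again a ring, landing in Case (2) or (3). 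The remaining subcase $\gamma_2=\delta=id_S$ makes $S$ additively idempotent by Lemma \ref{3.4}(iv); a short computation using $a+a=a$ shows $x+y=0_S\Rightarrow x=y=0_S$, so the partition $\{\{0_S\},T\}$ is a congruence, and simplicity forces $|S|\leq 2$, yielding $S\cong\mathbb{T}_4$ (Case (1)).

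In the branch $w = o_S$, Proposition \ref{5.3} either places $S\cong\mathbb{T}_8$ directly (Case (1)) or supplies $S+S=S$ together with $TT\sub T$. Assuming the latter and splitting on $\gamma_2$: if $\gamma_2 = id_S$ and $\delta = S\times S$, then additive cancellation applied to $x+o_S=o_S=o_S+o_S$ forces $x=o_S$ for every $x$, a contradiction; hence $\delta=id_S$ and $S$ is additively idempotent by Lemma \ref{3.4}(iv), giving Case (4). If $\gamma_2=S\times S$ and $\gamma_3=id_S$, Lemma \ref{3.4}(ii) would produce a ring, in which the multiplicative absorber coincides with $0_S$, contradicting $w=o_S$; thus $\gamma_3=S\times S$, and Lemma \ref{3.4}(iii) delivers $nx=o_S$ for every $n\geq 2$. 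This furnishes all the algebraic content of Case (5) except its last two assertions.

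The main obstacle is precisely those two assertions: that such an $S$ is infinite and that the powers $y,y^2,y^3,\dots$ of every $y\in T$ are pairwise distinct. It suffices to exclude any relation $y^m=y^n$ with $y\in T$ and $m<n$, since this at once forces infinitely many elements $\{y^i\}\sub T$. If such a relation did hold, a standard eventual-periodicity argument in the semigroup $(T,\cdot)$ would produce an idempotent $e=y^k\in T$ with $e^2=e\neq o_S$, and the two-element set $\{e,o_S\}$ would form a sub-semiring of $S$ isomorphic to $\mathbb{T}_8$ (using $e+e=2e=o_S$). My approach for the contradiction is to try to promote this idempotent to a proper nontrivial congruence of $S$, for instance via the congruence generated by the pair $(e,o_S)$ or a refinement of the relation $a\sim b\Leftrightarrow eae=ebe$, to violate congruence-simplicity; should a direct combinatorial argument prove elusive, one can invoke the classification of finite congruence-simple semirings from \cite{zumbragel} to exclude finite $S$ of the present form and then upgrade to distinctness for every $y$. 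This interplay between semigroup dynamics, the additive collapse $2x=o_S$, and the global simplicity of $S$ is the step I expect to cost the most work.
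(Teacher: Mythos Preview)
Your case analysis via $\gamma_2$, $\gamma_3$, $\delta$ is sound and, for the branch $w=0_S$, genuinely different from the paper's route. The paper instead introduces the ring-part $R=\{a\in S:\ \exists\,c\in S,\ a+c=0_S\}$ and uses the congruence $\alpha_R$: either $\alpha_R=id_S$, whence $R=\{0_S\}$, $T$ is a subsemiring, and the partition $\{\{0_S\},T\}$ forces $|S|\le 2$; or $\alpha_R=S\times S$, whence $R=S$ and $S$ is a simple ring without zero-divisors. Your argument reaches the same endpoints through Lemma~\ref{3.4} alone, and has the mild advantage of using one toolkit uniformly across both branches; the paper's $R$-argument is slightly more direct in that it never needs to separate characteristic~$2$ from the rest.

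The genuine gap is the last step of case~(5), and your proposed remedies do not close it. First, the fallback to \cite{zumbragel} is inapplicable: that classification treats finite \emph{additively idempotent} congruence-simple semirings, whereas in case~(5) one has $2x=o_S\neq x$ for $x\in T$, so $S$ is not additively idempotent. Second, even granting infinitude of $S$, you cannot ``upgrade'' to pairwise distinctness of all powers of every $y\in T$; the implication only runs the other way. Third, your direct attempts via the idempotent $e$ face real obstacles: by simplicity the congruence generated by $(e,o_S)$ is automatically $S\times S$, which is no contradiction, and the relation $a\sim b\Leftrightarrow eae=ebe$ is not multiplicatively compatible in general (from $eae=ebe$ one gets $eaece=ebece$, not $eace=ebce$). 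The paper does not argue this step internally either: it cites \cite[Lemma~8.8]{zeropotent}, which is exactly the statement that in such a semiring the powers $y,y^2,y^3,\dots$ are pairwise distinct for every $y\in T$. You should either invoke that lemma or reproduce its proof; the strategies you outline will not substitute for it.
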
 
\begin{proof}
If $S$ has two elements then, according to Section \ref{two-elements}, it is isomorphic to one of the semirings $\mathbb{T}_4$ or $\mathbb{T}_8$ (case (1)) or $\mathbb{T}_2$ (case (2)) or $\mathbb{T}_6$ (case (4)). We can assume therefore that $|S|\geq 3$. Due to Theorem \ref{4.6}, $TT\sub T$ and either $w=0_S$ or $w=o_S$.

Firstly, let $w=0_S$ (i.e., $w$ is additively neutral and multiplicatively absorbing). Put $R=\set{a\in S}{(\exists c\in S)\ a+c=0_S}$. Clearly, $0_S\in R$ and $R$ is an ideal of $S$. Moreover, $R$ is a ring and $a+b\in R$ implies $a,b\in R$ for every $a,b\in S$. Now, consider the congruence $\alpha_{R}$ (see Remark \ref{3.1}(1)).
 
 Assume that $\alpha_R=id_S$. By Lemma \ref{3.2}(ii), we have that $R=\{0_S\}$ and it follows that the set $T$ is a subsemiring of $S$. Now, $\varrho=(T\times T)\cup \{(0_S,0_S)\}$ is a congruence of $S$ and $\varrho\neq S\times S$. Hence $\varrho=id_S$ and $|S|=2$, a contradiction with $|S|\geq 3$. 
 
 Next, assume that $\alpha_R\neq id_S$. Then $\alpha_R=S\times S$ and therefore for every $a\in S$ there are $u,v\in R$ such that $a+u=0_S+v\in R$. By the properties of $R$, it follows that $a\in R$ and thus $R=S$. Therefore $S$ is a simple ring and, as $ab\neq 0_S$ for all $a,b\in T$, the ring $S$ is without zero-divisors. If $S$ is infinite, we have obtained the case (3).  Assume therefore that $S$ is finite. Since $S$ has no zero-divisors, the multiplicative semigroup $T(\cdot)$ is cancellative and finite. Hence $T(\cdot)$ is a group and, by the well known Wedderburn theorem,  $S$ is a (commutative) field (the case (2)).
 
 Secondly, let $w=o_S$ (i.e., $w$ is both additively  and multiplicatively absorbing). We see immediately that $S$ is not additively cancellative.  
 If $S$ is additively idempotent we obtain the case (4). 
 
 Assume, henceforth, that $S$ is not additively idempotent. By  Lemma \ref{3.4}(i) and (iv) we have that $\delta=S\times S$ and $\gamma_2\neq id_S$. Hence $\gamma_2= S\times S$ and, by Lemma \ref{3.4}(ii), $\gamma_3\neq id_S$. Therefore $\gamma_3= S\times S$ and, by Lemma \ref{3.4}(iii), we conclude that $2x=o_S$ for every $x\in S$. By Proposition \ref{5.3}, $S+S=S$. The fact that the powers $y^n$, $n\geq 1$, are pair-wisely different is proved in \cite[Lemma 8.8]{zeropotent}. We have obtained the last case (5) which concludes our proof. 
\end{proof}

\begin{example}\label{6.3}

(i) Let $G(\cdot)$ be a semigroup, $o\notin G$ and $V(G)=G\cup\{o\}$. Put $x\cdot o = o=o\cdot x$, $x+x=x$ and $x+y=o$ for all $x,y\in V(G)$, $x\neq y$. If $G$ is cancellative (i.e., $ac\neq bc$ and $ca\neq cb$ for all $a,b,c\in G$ such that $a\neq b$) then $V(G)(+,\cdot)$ becomes an additively idempotent semiring, the element $o$ is bi-absorbing and $x^2\neq o$ for every $x\in G$. 

Assume, moreover, that $G$ is a simple semigroup (i.e., $GaG=G$ for every $a\in G$). It is not complicated to check that the semiring $V(G)$ is then both congruence- and ideal-simple. Clearly, the semiring $V(G)$ is of type (4) in Theorem \ref{6.1}. This example, moreover, shows that the additive analogue of Theorem \ref{4.6}(ii) is not true. 

(ii) A natural example of a simple cancellative semigroup in (i) is a group. Let us notice that also every finite cancellative semigroup is a group. Thus, if the semiring $V(G)$ is finite then $G$ has to be a group. On the other hand, there exist (infinite) simple cancellative semigroups that are not groups (e.g., the multiplicative matrix semigroup $\set{(\begin{smallmatrix}
                a & b   \\                                                                                                                                                                                                                     0& 1\end{smallmatrix}
)}{a,b\in\R^+}$).   
\end{example}

\begin{corollary}\label{commutative}
  Let $S$ be a congruence-simple semiring possessing a multiplicatively absorbing element $w$ such that $x^2\neq w$ for every $x\in S\setminus\{w\}$. If $S$ is commutative then just one of the following three cases takes place:
 \begin{enumerate}
  \item $S\cong\mathbb{T}_4$ or $\mathbb{T}_8$;
  \item $S$ is a field;
  \item $S\cong V(G)$, where $G$ is a commutative group (see Example $\ref{6.3}$).
 \end{enumerate}
\end{corollary}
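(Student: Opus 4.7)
The plan is to apply Theorem \ref{6.1} and collapse its five cases to the three of the corollary using commutativity. Case (1) is immediate, and the finite field of case (2) is case (2) of the corollary. For case (3) (an infinite simple ring without zero-divisors), commutativity upgrades $S$ to a field: for any $0_S \neq a$, the ideal $aS$ is nonzero (because $a^2 \neq 0_S$) hence equals $S$; solving $a = ae$ and using cancellation yields a multiplicative identity, after which every nonzero element is a unit. Case (5) is impossible commutatively: since $2z = o_S$ for every $z$, commutativity gives $(x+y)^2 = x^2 + 2xy + y^2 = x^2 + o_S + y^2 = o_S$, and the no-nilpotent hypothesis then forces $x + y = o_S$ identically, contradicting $S + S = S$.

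The main work is case (4): $w = o_S$, $S$ commutative additively idempotent, $TT \sub T$. I claim $S \cong V(G)$ with $G = T$ a commutative group. First, the relation defined by $x \rho y$ iff $x = y$ or $ax = ay$ for some $a \in T$ is a congruence of $S$ (commutativity enters in transitivity: $ax = ay$ and $by = bz$ yield $(ab)x = (ab)z$); since $\rho = S \times S$ would force some $ab = o_S$ with $a, b \in T$, contradicting Theorem \ref{4.6}(ii), we conclude $\rho = id_S$ and $T$ is cancellative. Next, iterating bi-ideal closure in the additively idempotent commutative setting shows the bi-ideal generated by $a \in T$ equals $(S + a) \cup aS \cup (S + aS)$, and by congruence-simplicity (via Lemma \ref{3.3}) this is all of $S$; combined with cancellation and the observation that the relation $(T \times T) \cup \{(o_S, o_S)\}$ would itself be a proper non-trivial congruence whenever $T$ is additively closed (i.e., $a + b \in T$ for all $a, b \in T$), one extracts a multiplicative identity $e \in T$, shows $T$ has no proper multiplicative ideals, and concludes $T$ is a commutative group.

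Finally, set $H = \{d \in T : e + d \in T\}$. This is a subgroup of $T$: closure under inversion comes from $d^{-1}(e+d) = e + d^{-1} \in T$, and closure under multiplication from $(e+d)(e+d') = e + d + d' + dd' \in T$, whence $e + dd' = o_S$ would absorb the sum to $o_S$, a contradiction. The relation $x \theta y$ iff $x = y$ or $x, y \in T$ and $xH = yH$ is then a congruence of $S$, using the key formula $x + c = x(e + x^{-1}c)$, which lies in $xH$ when $c \in xH$ and equals $o_S$ otherwise. Since $o_S$ is $\theta$-related only to itself, $\theta \neq S \times S$; by congruence-simplicity, $\theta = id_S$, which forces $H = \{e\}$ and hence $a + b = o_S$ for all distinct $a, b \in T$, establishing $S \cong V(T)$.

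The main obstacle is the group-structure step of case (4): extracting the multiplicative identity of $T$ and ruling out proper multiplicative ideals from the bi-ideal identity $(S + a) \cup aS \cup (S + aS) = S$, without yet knowing the addition is as in $V(G)$, requires a careful interplay of commutativity, cancellation, and the auxiliary congruence $(T \times T) \cup \{(o_S, o_S)\}$ that is available precisely when $T$ happens to be additively closed.
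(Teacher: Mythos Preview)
The paper's own proof is a two-line appeal to Theorem \ref{6.1} together with the external classification of commutative congruence-simple semirings \cite[Theorem~10.1]{simple_comm}; it performs no case analysis itself. Your approach is genuinely different: you run through the five cases of Theorem \ref{6.1} and try to collapse each one directly. Your handling of cases (1)--(3) is correct (the commutative-simple-domain-implies-field argument in (3) is standard and fine), and your elimination of case (5) via $(x+y)^2=x^2+2xy+y^2=x^2+o_S+y^2=o_S$ is a clean, self-contained observation that the paper does not isolate. In case (4), once $T$ is known to be a group, your subgroup $H=\{d\in T: e+d\in T\}$ and the coset congruence $\theta$ do work as claimed and force the addition to be that of $V(T)$; this part is correct and rather elegant.

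The genuine gap is precisely the step you yourself flag as the ``main obstacle'': showing that $T$ is a group. You list three ingredients --- the bi-ideal identity $(S+a)\cup aS\cup(S+aS)=S$, cancellativity of $T$ (correctly obtained via your congruence $\rho$), and the observation that $(T\times T)\cup\{(o_S,o_S)\}$ is a congruence when $T$ is additively closed --- and then assert that ``one extracts a multiplicative identity $e\in T$, shows $T$ has no proper multiplicative ideals.'' But you never carry this out. The third ingredient only tells you (for $|S|\ge 3$) that some $u+v=o_S$ with $u,v\in T$; feeding this into the bi-ideal identity rules out $v\in S+u$ (since $v=s+u$ would give $v=v+u=o_S$), but the surviving possibilities $v\in uS$ and $v\in S+uS$ do not visibly produce an identity element, and nothing you have written forces $aT=T$. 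This is not a routine omission: it is exactly the content that the paper delegates to \cite{simple_comm}, and a self-contained argument here requires substantially more than the interplay you describe. As it stands, case (4) of your proposal is an outline with its central lemma missing.
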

\begin{proof}
 It follows immediately from Theorem \ref{6.1} and the classification of commutative congruence-simple semirings in \cite[Theorem 10.1]{simple_comm}.
\end{proof}

\begin{remark}\label{6.2}
 (i) Let us note that among the list in Theorem \ref{6.1} the \emph{only} semiring with a zero that is not a ring is $\mathbb{T}_4$. This semiring is additively idempotent.

 (ii) There are well known examples of rings of type (3) in Theorem \ref{6.1}. Every division ring is a simple ring without zero-divisors. On the other hand, examples of 
 simple rings without zero-divisors that are not division rings are also not rare (see, e.g., \cite{cohn1} and \cite{cohn2}).
 
 (iii) No example of a semiring of type (5) in Theorem \ref{6.1} is known to the authors of the present brief note.
\end{remark}

\begin{remark}\label{6.4}
Finally, let us note that in \cite[Theorem 2.1]{cornish} there is constructed an example of a \emph{subdirectly irreducible} semiring $S$ with a zero that contains no non-trivial nilpotents and yet \emph{has} non-trivial divisors of zero. This semirings $S$ has five elements, is commutative, has a unity and is both multiplicatively and additively idempotent.
\end{remark}

\section{A conjecture}

In the last section we propose a conjecture on finite semirings from Theorem \ref{6.1}. First, let us recall a few results on congruence-simple semirings in  \cite{jezek,zumbragel}.

 Let $L(+)$ be a semilattice with the greatest element $1$ and $|L|\geq 2$. Denote by $End_1(L)(+,\cdot)$ the semiring of all endomorphisms of the semilattice $L$ preserving the element $1$. The semiring operations on $End_1(L)$ are defined in the usual way, i.e.,  for all $r,s\in End_1(L)$ and every $x\in L$ we have $(r+s)(x)=r(x)+s(x)$ and $(r\cdot s)(x)=r(s(x))$.  The semiring $End_1(L)$ is additively idempotent (with an induced natural ordering) and has a bi-absorbing element $o$, where $o(x)=1$ for every $x\in L$. 

Further, denote by $Y(L)$ the set  of all endomorphisms from $End_1(L)$ with the range at most $2$ and put  $\mathbf{G}(L)=\set{f\in End_{1}(L)}{\exists g\in Y(L)\ \ g\leq f}$.

\begin{remark}\label{7.1}
In \cite[Theorem 2.2]{jezek} it was shown that every subsemiring $S\sub End_{1}(L)$ such that $Y(L)\sub S\sub \mathbf{G}(L)$ is congruence-simple (and contains the bi-absorbing element $o$). Notice also, that the set $\mathbf{G}(L)$ itself is a subsemiring of $End_{1}(L)$.

Now, the semiring $S$ has \emph{no non-trival nilpotent elements} if and only if $|L|=2$ (and, consequently, if $S\cong\mathbb{T}_6$).

Indeed, if $|L|=2$ then, obviously, $S=\{o,i\}$, where $i$ is the identical map on $L$, and $S$ has therefore no non-trival nilpotent elements.

Assume now that $|L|\geq 3$. Then there are $a,b\in L\setminus\{1\}$ such that $b\not\leq a$. Let $e: L\to L$ be a map defined as $e(x)=b$ if $x\leq a$ and $e(x)=1$ otherwise, for every  $x\in L$. It is easy to check that $e\in Y(L)$  and, consequently, $e\in S$. Since $e(x)\geq b$ for every $x\in L$ it follows that $e(e(x))\geq e(b)=1=o(x)$. As $o$ is the greatest element of $S$, we obtain that $e^2=o$ and $e$ is a non-trivial nilpotent element in $S$. 
\end{remark}

\begin{remark}\label{7.2}
 (i) In \cite{zumbragel} the following construction was studied. Let $S$ ($S'$, resp.) be semirings with at least two elements and with a bi-absorbing element $o$ ($o'$, resp.). Let $S\times S'$ be the product of these semiring (with the usual component-wise operations) and with a bi-absorbing element $(o,o')$. The set $$I=\set{(s,s')\in S\times S'}{s=o\ \  \text{or}\ \ s'=o'}$$ is a bi-ideal of $S\times S'$. By Remark \ref{3.1}(2) the relation $\beta_I=(I\times I)\cup id_{S\times S'}$ is a congruence on the semiring $S\times S'$, and we may define a \emph{box-product} of the semirings $S$ and $S'$ as follows: $$S\boxtimes S'=(S\times S')/\beta_I\ .$$
 
 Obviously, the semiring $S\boxtimes S'$ has a bi-absorbing element $(o,o')/\beta_I$ and it is easy to check that if $S\cong\mathbb{T}_6$ then $S\boxtimes S'\cong S'$.
 
Notice also, that if the semiring $S$ has a non-trivial nilpotent element then we may easily find a non-trivial nilpotent element in $S\boxtimes S'$.

(ii) In \cite[Section 8]{zumbragel} it was conjectured that every finite additively idempotent congruence-simple semiring with a bi-absorbing element is isomorphic to $S\boxtimes V(G)$ for some semiring $S$ from Remark \ref{7.1}  and  some finite group $G$   (see Example \ref{6.3}).

Let us note that, according to part (i) and  Remark \ref{7.1}, the semiring   $S\boxtimes V(G)$ has no non-trivial nilpotent elements if and only if $S\cong \mathbb{T}_6$ (and, consequently, if  $S\boxtimes V(G)\cong V(G)$). 
\end{remark}

As a conclusion, the conjecture on the structure of finite congruence-simple semirings in \cite{zumbragel} implies, by Theorem \ref{6.1} and Remark \ref{7.2}, the following conjecture.

\begin{conjecture}
  Let $S$ be a congruence-simple semiring possessing a multiplicatively absorbing element $w$ such that $x^2\neq w$ for every $x\in S\setminus\{w\}$. If $S$ is finite then just one of the following three cases takes place:%
 \begin{enumerate}
  \item $S\cong\mathbb{T}_4$ or $\mathbb{T}_8$;
  \item $S$ is a finite field;
   \item $S\cong V(G)$, where $G$ is a finite group (see Example $\ref{6.3}$).
 \end{enumerate}
\end{conjecture}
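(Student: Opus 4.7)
The plan is to combine Theorem~\ref{6.1} with the still-open conjecture of \cite[Section~8]{zumbragel} recalled in Remark~\ref{7.2}(ii), via the box-product machinery of Section~7. The present statement is a reformulation of that conjecture under the extra ``no non-trivial nilpotents'' hypothesis, and the role of that hypothesis is to collapse one factor of the box-product construction to $\mathbb{T}_6$.

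First, apply Theorem~\ref{6.1}. Since $S$ is assumed finite, the cases (3) (an infinite simple ring) and (5) (explicitly infinite) are immediately ruled out. Hence $S$ lies in one of the cases (1), (2) or (4) of Theorem~\ref{6.1}. Case (1) gives $S\cong\mathbb{T}_4$ or $\mathbb{T}_8$, which is the first alternative of the conjecture. Case (2) gives a finite field, which is the second alternative. So the only nontrivial task is to treat case (4): $w=o_S$, $S$ is finite and additively idempotent, and $TT\subseteq T$ for $T=S\setminus\{o_S\}$.

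Second, in case (4), invoke the \cite{zumbragel} conjecture as stated in Remark~\ref{7.2}(ii): $S\cong S'\boxtimes V(G)$ for some subsemiring $S'\subseteq End_{1}(L)$ with $Y(L)\subseteq S'\subseteq \mathbf{G}(L)$ and some finite group $G$. Now combine this with Remark~\ref{7.2}(i) and Remark~\ref{7.1}: because $S$ has no non-trivial nilpotent elements, the factor $S'$ can have none either, so Remark~\ref{7.1} forces $|L|=2$ and hence $S'\cong\mathbb{T}_6$. Then $S\cong\mathbb{T}_6\boxtimes V(G)\cong V(G)$ by Remark~\ref{7.2}(i), which is the third alternative of the conjecture. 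Disjointness of the three cases is automatic: case (1) consists of two specific two-element semirings, case (2) has a zero that is not additively absorbing, and in case (3) the element $w$ is bi-absorbing, with $|V(G)|\geq 3$ as soon as $|G|\geq 2$ (while $V(\{1\})\cong\mathbb{T}_6$ is distinct from $\mathbb{T}_4$ and $\mathbb{T}_8$).

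The main obstacle is precisely that the above reduction is conditional on the unresolved conjecture of \cite{zumbragel}. An unconditional proof would require independently classifying the finite additively idempotent congruence-simple semirings with bi-absorbing element $o_S$ in which only $o_S$ is nilpotent. The no-nilpotents hypothesis may make this special case substantially easier than the general one: since $TT\subseteq T$ and $T$ is finite, the multiplicative semigroup $T(\cdot)$ is finite, and if one could use congruence-simplicity to show that $T(\cdot)$ is cancellative (hence a group by the finiteness argument of Example~\ref{6.3}(ii)) and then force the addition to coincide with that of $V(T)$, the conclusion $S\cong V(T)$ would follow. Carrying this through rigorously, however, appears to be of essentially the same order of difficulty as proving the full \cite{zumbragel} conjecture, and the authors leave it open.
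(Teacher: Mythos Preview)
Your proposal is correct and matches the paper's own treatment exactly: the paper does not prove this statement but presents it explicitly as a \emph{conjecture}, motivated precisely by the chain of implications you describe (Theorem~\ref{6.1} to eliminate cases (3) and (5) under finiteness, then the \cite{zumbragel} conjecture from Remark~\ref{7.2}(ii) combined with Remarks~\ref{7.1} and~\ref{7.2}(i) to collapse case (4) to $V(G)$). Your explicit acknowledgment that the argument is conditional on the unresolved \cite{zumbragel} conjecture, and your sketch of what an unconditional proof would require, go somewhat beyond the paper's brief one-sentence justification but are entirely in its spirit.
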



\end{document}